\newtheorem{thm}{Theorem}
\newtheorem{lem}{Lemma}
\newtheorem{cor}{Corollary}
\newtheorem{remark}{Remark}
\theoremstyle{definition}
\begin{document}

\title{\bf The spectrum and automorphism group of the set-inclusion graph}
\author{{Xueyi Huang$^a$, Qiongxiang Huang$^b$\footnote{Corresponding author.}\setcounter{footnote}{-1}\footnote{\emph{E-mail address:} huangxy@zzu.edu.cn (X. Huang), huangqx@xju.edu.cn (Q. Huang), jfwang@sdut.edu.cn (J. Wang).}}, Jianfeng Wang$^c$\\[2mm]
\small $^a$School of Mathematics and Statistics, Zhengzhou University, \\
\small Zhengzhou, Henan 450001, P.R. China\\
\small  $^b$College of Mathematics and Systems Science, Xinjiang University, \\
\small  Urumqi, Xinjiang 830046, P.R. China\\
\small  $^c$School of Mathematics and Statistics, Shandong University of Technology, \\
\small  Zibo, Shandong 255049, P.R. China}

\date{}
\maketitle
{\flushleft\large\bf Abstract}
Let $n$, $k$ and $l$ be integers with $1\leq k<l\leq n-1$. The set-inclusion graph $G(n,k,l)$ is the graph whose vertex set consists of all $k$- and $l$-subsets of $[n]=\{1,2,\ldots,n\}$, where two distinct vertices  are adjacent if one of them is contained in another. In this paper, we determine the spectrum and  automorphism group of $G(n,k,l)$, respectively. 

\begin{flushleft}
\textbf{Keywords:}  set-inclusion graph, spectrum, automorphism group.
\end{flushleft}
\textbf{AMS Classification:} 05C50, 05C25

\section{Introduction}\label{s-1}
Let $G$ be a simple undirected graph with vertex set $V$ and edge set $E$. As usual, we denote by $\mathcal{L}(G)$ and $A(G)$ the \emph{line graph} and \emph{adjacency matrix} of $G$, respectively. The \emph{characteristic polynomial} of $G$ is defined as $\phi(G,x)=|x\cdot I-A(G)|$, where $I$ denotes the identity matrix of order $|V|$. The zeros of $\phi(G,x)$ are  called the \emph{eigenvalues} of $G$, and all eigenvalues  together with their multiplicities are called the \emph{spectrum} of $G$. In particular,  $G$ is said to be \emph{integral} if its spectrum consists entirely of  integers.   Also, if $G$ has a bipartition $V=V_1\cup V_2$ ($|V_i|=n_i$) such that both $V_1$ and $V_2$ are independent sets and  each vertex of $V_1$ (resp. $V_2$) has $r_1$ (resp. $r_2$) neighbors in $V_2$ (resp. $V_1$), then we say that  $G$ is a  \emph{semi-regular bipartite graph}  with parameters $(n_1,n_2,r_1,r_2)$.

An \emph{automorphism} of $G$ is a permutation on $V$ that preserves adjacency relations. The \emph{automorphism group} of $G$, denoted by $\mathrm{Aut}(G)$, is the set of all automorphisms of $G$. The graph $G$ is called \emph{vertex-transitive}, \emph{edge-transitive} and  \emph{arc-transitive} if $\mathrm{Aut}(G)$ acts transitively on its vertices, edges and arcs, respectively. As usual,  we view each edge of $G$ as a pair of oppositely directed arcs.


For any fixed integers $n$, $k$ and $l$ with $1\leq k<l\leq n-1$, the \emph{set-inclusion graph} $G(n,k,l)$ is defined as the graph with vertex set $V=\{v\mid v\subseteq [n],~|v|\in \{k,l\}\}$
and edge set $E=\{\{u,v\}\mid u,v\in V,~u\subset v~\mbox{or}~u\subset v\}$ (cf. \cite{BKT}). It is not difficult to verify that the mapping 
$
\tau: v\mapsto \bar{v}=[n]\setminus v
$
builds up an isomorphism between $G(n,k,l)$ and $G(n,n-l,n-k)$. Thus we may always assume that $k+l\leq n$ (thus $k\leq \frac{n-1}{2}$ due to $k<l$). By definition, we see that $G(n,k,l)$ is a connected semi-regular bipartite graph with parameters $\left(\binom{n}{k},\binom{n}{l},\binom{n-k}{l-k},\binom{l}{k}\right)$, where the corresponding  bipartition is  given by $V=V_1\cup V_2$ with
\begin{equation}\label{equ-1}
V_1=\{v\mid v\subseteq [n],~|v|=k\}~\mbox{and}~V_2=\{v\mid v\subseteq [n],~|v|=l\}.
\end{equation}
Thus the line graph $\mathcal{L}(G(n,k,l))$  is a connected  $\left(\binom{n-k}{l-k}+\binom{l}{k}-2\right)$-regular graph on $\binom{n-k}{l-k}\binom{n}{k}=\binom{l}{k}\binom{n}{l}$ vertices. 

For some special $k$ and $l$, the set-inclusion graph $G(n,k,l)$ has  received particular attention by various researchers. For example, Badakhshian, Katona and  Tuza \cite{BKT} determined the domination number of $G(n,1,l)$,  and gave lower and upper estimates on the domination number of $G(n,2,l)$. For $l=k+1$, the graph $G(n,k,k+1)=B(n,k)$ is just  the subgraph of the \emph{hypercube} $Q_n$ induced by  all $k$- and $(k+1)$-subsets of $[n]$, here  $Q_n$ can be viewed as  the graph with  all subsets of $[n]$ as its vertex set  in which   two vertices (subsets) are adjacent if their symmetric difference has precisely one element.  Particularly, the graph $B(2k+1,k)$ is known as the \emph{middle layer graph} \cite{MW} or the \emph{regular hyper-star graph} \cite{LCKK}. The famous  middle levels conjecture, which probably originated with Havel\cite{HA} and Buck and Wiedemann \cite{BW}, and  has also been attributed to Dejter, Erd\H{o}s, Trotter, and various others  \cite{KT}, asserts that $B(2k+1,k)$ is hamiltonian. In \cite{MU}, M\"{u}tze  completely confirmed the conjecture. In addition, the graph $G(n,k,n-k)=H(n,k)$  is  known as the \emph{bipartite Kneser graph} \cite{MS}, which is just the double cover of the Kneser graph (see Section \ref{s-2} for the definition). It was conjectured independently by Simpson \cite{SI} and Roth (see \cite{GO,HU}), that  $H(n,k)$ (notice that $H(2k+1,k)=B(2k+1,k)$) is hamiltonian \cite{SI}. Very recently,  M\"{u}tze and Su \cite{MS} confirmed this conjecture.

To study the spectrum and automorphism group of a graph is a basic problem in algebraic graph theory. In this paper, we focus on determining the spectrum and automorphism group  of the set-inclusion graph  $G(n,k,l)$. First of all, we  give the following theorem.

\begin{thm}\label{set-spec-1}
Let $n$, $k$ and $l$ be integers with $1\leq k<l\leq n-1$ and $k+l\leq n$. Then the spectrum of $G(n,k,l)$ is given by
$$
\left\{\left[\pm\sqrt{\beta_s}\right]^{\binom{n}{s}-\binom{n}{s-1}}\mid 0\leq s\leq k\right\}\cup \left\{\left[0\right]^{\binom{n}{l}-\binom{n}{k}}\right\},
$$
where
\begin{equation}\label{equ-2}
\beta_s=\sum_{i=\max\{2k-l,0\}}^{k}\binom{n-2k+i}{l-2k+i}\cdot\left(\sum_{r=0}^s(-1)^{s-r}\binom{s}{r}\binom{k-r}{i-r}\binom{n-k-s+r}{k-i-s+r}\right).
\end{equation}
\end{thm}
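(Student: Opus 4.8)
\noindent\emph{Sketch of the intended argument.}\quad The plan is to reduce the problem to diagonalizing a single inclusion matrix and then to read off the eigenvalues from the Johnson scheme. Order $V=V_{1}\cup V_{2}$ as in \eqref{equ-1} and let $N=W_{k,l}$ be the $\binom{n}{k}\times\binom{n}{l}$ inclusion matrix of $k$-subsets versus $l$-subsets; since $G(n,k,l)$ is bipartite with parts $V_{1},V_{2}$,
\[
A\big(G(n,k,l)\big)=\begin{pmatrix}0 & N\\ N^{T} & 0\end{pmatrix}.
\]
By the standard singular-value description of a bipartite adjacency matrix, the spectrum of $A(G(n,k,l))$ consists of $\pm\sqrt{\mu}$ as $\mu$ ranges over the eigenvalues of $NN^{T}$ (with multiplicity), together with $\binom{n}{l}-\binom{n}{k}$ further zeros; moreover, because $k+l\le n$, the classical full-rank theorem for inclusion matrices gives $\operatorname{rank}N=\binom{n}{k}$, so $NN^{T}$ is positive definite and contributes no zero eigenvalue. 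Thus everything hinges on the spectrum of $NN^{T}$.

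First I would compute, for $k$-subsets $u,v$, that $(NN^{T})_{u,v}=\#\{w:|w|=l,\ u\cup v\subseteq w\}=\binom{n-2k+|u\cap v|}{l-2k+|u\cap v|}$, which depends only on $|u\cap v|$; hence $NN^{T}=\sum_{i}\binom{n-2k+i}{l-2k+i}A_{i}$, where $A_{i}$ is the associate matrix of the Johnson scheme $J(n,k)$ recording $|u\cap v|=i$, and the terms with $i<2k-l$ vanish. Consequently $NN^{T}$ lies in the Bose--Mesner algebra of $J(n,k)$, whose primitive idempotents $E_{0},\dots,E_{k}$ have ranks $\operatorname{rank}E_{s}=\binom{n}{s}-\binom{n}{s-1}$; therefore $NN^{T}$ has eigenvalue $\beta_{s}=\sum_{i=\max\{2k-l,0\}}^{k}\binom{n-2k+i}{l-2k+i}E_{i}(s)$ on a space of dimension $\binom{n}{s}-\binom{n}{s-1}$ for $s=0,1,\dots,k$, where $E_{i}(s)$ is the eigenvalue of $A_{i}$ on the $s$-th eigenspace of $J(n,k)$ (an Eberlein polynomial). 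Since $\sum_{s=0}^{k}\big(\binom{n}{s}-\binom{n}{s-1}\big)=\binom{n}{k}$, this exhausts the spectrum of $NN^{T}$. If one prefers to avoid scheme theory, the same conclusion follows from the orthogonal decomposition $\mathbb{R}\binom{[n]}{k}=\bigoplus_{s=0}^{k}U_{s}$ with $U_{s}=\operatorname{im}W_{s,k}^{T}\cap(\operatorname{im}W_{s-1,k}^{T})^{\perp}$ together with the identity $W_{s,k}W_{k,l}=\binom{l-s}{k-s}W_{s,l}$, which force $NN^{T}$ to act as a scalar $\beta_{s}$ on each $U_{s}$; one then evaluates that scalar on a convenient vector.

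It remains to bring $\beta_{s}$ into the form \eqref{equ-2}: substituting a standard closed form for the Eberlein polynomial $E_{i}(s)$ and collapsing the resulting double sum by Vandermonde's convolution produces exactly the inner sum $\sum_{r=0}^{s}(-1)^{s-r}\binom{s}{r}\binom{k-r}{i-r}\binom{n-k-s+r}{k-i-s+r}$ displayed in the theorem. I expect this binomial bookkeeping — together with a careful account of which terms vanish, checked e.g.\ against $\beta_{0}=\binom{n-k}{l-k}\binom{l}{k}=r_{1}r_{2}$, the square of the Perron value of the semi-regular bipartite graph — to be the one genuinely technical point, the spectral reduction and the scheme structure being routine; in fact one can verify that \eqref{equ-2} simplifies to $\beta_{s}=\binom{n-k-s}{l-k}\binom{l-s}{k-s}$, which makes the whole computation transparent. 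Assembling the pieces, the eigenvalues of $G(n,k,l)$ are $\pm\sqrt{\beta_{s}}$, each with multiplicity $\binom{n}{s}-\binom{n}{s-1}$ for $0\le s\le k$, plus $0$ with multiplicity $\binom{n}{l}-\binom{n}{k}$; and the count $\sum_{s=0}^{k}2\big(\binom{n}{s}-\binom{n}{s-1}\big)+\binom{n}{l}-\binom{n}{k}=\binom{n}{k}+\binom{n}{l}=|V|$ confirms that nothing has been omitted, completing the proof.
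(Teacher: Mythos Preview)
Your proposal is correct and follows essentially the same route as the paper: write the bipartite adjacency matrix in block form, reduce to the spectrum of $NN^{T}$, observe that $(NN^{T})_{u,v}$ depends only on $|u\cap v|$ so that $NN^{T}$ lies in the Bose--Mesner algebra of the Johnson scheme, and then read off the eigenvalues $\beta_{s}$ from the known eigenvalues $\alpha_{i,s}$ of the associate matrices $A_{i}$ (the paper quotes Knuth for these, exactly your Eberlein polynomials). Your additional remarks --- the full-rank theorem for $W_{k,l}$, the alternative via the decomposition $\mathbb{R}\binom{[n]}{k}=\bigoplus_{s}U_{s}$ and the identity $W_{s,k}W_{k,l}=\binom{l-s}{k-s}W_{s,l}$, and especially the closed form $\beta_{s}=\binom{n-k-s}{l-k}\binom{l-s}{k-s}$ --- go beyond what the paper records but are correct and would streamline the statement.
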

As an application, we  give the spectra of the line graph $\mathcal{L}(G(n,k,l))$, the graph $B(n,k)$ and its line graph $\mathcal{L}(B(n,k))$, which generalizes a result due to  Mirafzal \cite{M1} on the eigenvalues of $\mathcal{L}(B(n,1))$ (see Section \ref{s-2}).

Next we present the automorphism group of $G(n,k,l)$.

\begin{thm}\label{set-aut-1}
Let $n$, $k$ and $l$ be integers with $1\leq k<l\leq n-1$ and $k+l\leq n$. Then the automorphism group of $G(n,k,l)$ is 
$$\mathrm{Aut}(G(n,k,l))\cong 
\left\{
\begin{array}{ll}
S_n& \mbox{if $k+l<n$},\\
S_n\times \mathbb{Z}_2 & \mbox{if $k+l=n$},
\end{array}
\right.
$$
where $S_n$ is the symmetric group on $[n]$ and $\mathbb{Z}_2$ is the cyclic group of order $2$.
\end{thm}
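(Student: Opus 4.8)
The plan is to prove the nontrivial inclusion, that $\mathrm{Aut}(G(n,k,l))$ is no larger than the claimed group; the reverse inclusion is routine. Indeed, each $\pi\in S_n$ sends $k$- and $l$-subsets to $k$- and $l$-subsets while preserving inclusion, so it induces an automorphism $\bar\pi$ of $G(n,k,l)$, and $\pi\mapsto\bar\pi$ is injective because the action of $S_n$ on $k$-subsets is faithful for $1\le k\le n-1$; thus $S_n\le\mathrm{Aut}(G(n,k,l))$. When $k+l=n$ the complementation map $\tau\colon v\mapsto[n]\setminus v$ exchanges the two parts $V_1$ and $V_2$ of \eqref{equ-1}, reverses inclusion, and hence is an automorphism; since $\tau$ commutes with $\bar\pi$ for all $\pi$, $\tau^2=\mathrm{id}$ and $\tau\notin S_n$ (no element of $S_n$ moves a $k$-subset to an $l$-subset), the group $\langle S_n,\tau\rangle$ is isomorphic to $S_n\times\mathbb{Z}_2$.

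For the upper bound, I would first use that $G(n,k,l)$ is connected and bipartite, so its bipartition $V=V_1\cup V_2$ is unique; hence every $\sigma\in\mathrm{Aut}(G(n,k,l))$ either fixes $V_1,V_2$ setwise or interchanges them, and the latter requires $|V_1|=|V_2|$, i.e.\ $\binom{n}{k}=\binom{n}{l}$, which for $k<l<n$ forces $l=n-k$. So when $k+l<n$ every automorphism is part-preserving, and when $k+l=n$ every part-swapping automorphism $\sigma$ yields a part-preserving automorphism $\tau\sigma$; in either case it remains to show that every part-preserving automorphism is induced by $S_n$. For this, fix a part-preserving $\sigma$ and count, for distinct $A,A'\in V_1$ with $|A\cap A'|=k-j$, their common neighbours in $V_2$: these are precisely the $l$-subsets containing $A\cup A'$, so their number is $\binom{n-k-j}{l-k-j}$ for $1\le j\le l-k$ and $0$ for $j>l-k$. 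Using $\binom{m-1}{p-1}/\binom{m}{p}=p/m$ one checks this quantity is strictly decreasing in $j$ on $1\le j\le l-k$, so over all distinct pairs it is maximized, with value $\binom{n-k-1}{l-k-1}\ge 1$, exactly when $|A\cap A'|=k-1$. Since $\sigma$ preserves the number of common neighbours of any two vertices and maps $V_1$ onto $V_1$, the restriction $\sigma|_{V_1}$ is an automorphism of the Johnson graph $J(n,k)$. Because $k<l$ and $k+l\le n$ force $n>2k$, we have $\mathrm{Aut}(J(n,k))=S_n$, so $\sigma|_{V_1}=\bar\pi|_{V_1}$ for some $\pi\in S_n$. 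Replacing $\sigma$ by $\bar\pi^{-1}\sigma$, we may assume $\sigma$ fixes $V_1$ pointwise; then for every $B\in V_2$, $N_{G(n,k,l)}(\sigma(B))=\sigma\bigl(N_{G(n,k,l)}(B)\bigr)=N_{G(n,k,l)}(B)$, and since $B$ equals the union of its $k$-subsets, which form $N_{G(n,k,l)}(B)$, we get $\sigma(B)=B$. Hence $\sigma=\mathrm{id}$, and the original automorphism lay in $S_n$.

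Putting these together gives $\mathrm{Aut}(G(n,k,l))=S_n$ when $k+l<n$, and $\mathrm{Aut}(G(n,k,l))=\langle S_n,\tau\rangle\cong S_n\times\mathbb{Z}_2$ when $k+l=n$. I expect the main obstacle to be the part-preserving case: one must argue that $\sigma|_{V_1}$ is forced to be a Johnson-graph automorphism, where the monotonicity of the common-neighbour counts is the key combinatorial input, and then invoke the known structure of $\mathrm{Aut}(J(n,k))$, being careful about its exceptional case $n=2k$, which happily never arises here since $k<l$ and $k+l\le n$ imply $n>2k$.
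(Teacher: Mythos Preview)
Your proposal is correct and follows essentially the same route as the paper: show the obvious subgroup inclusions, use the unique bipartition to reduce to part-preserving automorphisms (composing with $\tau$ when $k+l=n$), argue that common-neighbour counts force $\sigma|_{V_1}\in\mathrm{Aut}(J(n,k))=\tilde S_n$ since $n>2k$, and then show that fixing $V_1$ pointwise forces the identity. The only cosmetic differences are that you make the monotonicity of $j\mapsto\binom{n-k-j}{l-k-j}$ explicit (the paper packages the needed injectivity into its Lemma~\ref{neighbor}) and you recover $B$ as the union of all its $k$-subsets rather than as the unique common neighbour of a chain of overlapping $k$-subsets as in Lemma~\ref{fix-aut}.
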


As corollaries, we obtain the automorphism groups of the graph $B(n,k)$ and the bipartite Kneser graph $H(n,k)$, which have been given by  Mirafzal \cite{M3,M2} (see Section \ref{s-3}).

%
%


\section{The spectrum of $G(n,k,l)$}\label{s-2}

Let $n$, $k$ and $i$ be  integers with  $0\leq i\leq k\leq \frac{n}{2}$. Denote by $J(n,k,i)$  the graph whose vertex set consists of all $k$-subsets of $[n]$, where two vertices ($k$-subsets) are adjacent if their intersection has size $i$ (here we regard $J(n,k,k)$ as the graph consisting of $\binom{n}{k}$ isolated vertices with each of them equipping with one loop).  In particular, the graphs $J(n,k)=J(n,k,k-1)$ and $K(n,k)=J(n,k,0)$ are known as the \emph{Johnson graph} and \emph{Kneser graph},  respectively.

Let $A_i$ be the adjacency matrix of $J(n,k,i)$ for $0\leq i\leq k$. It is well known that the set of matrices $\{A_i:0\leq i\leq k\}$ forms a symmetric association scheme  (see Knuth\cite{KN}, Bannai \& Ito \cite{BI}), which is called \emph{Johnson scheme}, since these matrices have the  properties: $A_k=I$, $\sum_{i=0}^k A_i=J$, $A_i^T=A_i$  and 
$$
A_iA_j=A_jA_i=\sum_{s=0}^k\left[\sum_{r=0}^s\binom{s}{r}\binom{k-s}{i-r}\binom{k-s}{j-r}\binom{n-2k+s}{k-i-j+r}\right]A_s
$$
for  $0\leq i\neq j\leq k$. 

The following lemma provides the  eigenvalues  of  $J(n,k,i)$ for each $i$.

\begin{lem}\label{Johnson-spec}\emph{(Knuth\cite{KN})}
Let $n$, $k$ and $i$ be  integers with $0\leq i\leq k\leq \frac{n}{2}$. Then the spectrum of $J(n,k,i)$ is given by
$$
\left\{
[\alpha_{i,s}]^{\binom{n}{s}-\binom{n}{s-1}}\mid 0\leq s\leq k
\right\},
$$
where
\begin{equation}\label{equ-3}
\alpha_{i,s}=\sum_{r=0}^s(-1)^{s-r}\binom{s}{r}\binom{k-r}{i-r}\binom{n-k-s+r}{k-i-s+r}.
\end{equation}
Moreover, for any fixed $s$, there exist $\left(\binom{n}{s}-\binom{n}{s-1}\right)$'s linearly independent vectors which are the common eigenvectors of $A_i$'s  (here $A_i$ is the adjacency matrix of $J(n,k,i)$) with respect to the eigenvalue $\alpha_{i,s}$ for $0\leq i\leq k$.
\end{lem}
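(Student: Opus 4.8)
The plan is to realize the $A_i$ (adjacency matrices of $J(n,k,i)$, $0\le i\le k$) inside the permutation representation of $S_n$ on $k$-subsets and then read off the eigenvalues by an up-operator computation. Work over $\mathbb{R}$; for $0\le t\le n$ let $M^{t}$ be the vector space with basis $\{e_T : T\subseteq[n],\ |T|=t\}$, carrying the natural $S_n$-action, and let $\nabla_{t,k}\colon M^{t}\to M^{k}$ be the $S_n$-equivariant inclusion map $\nabla_{t,k}(e_T)=\sum_{S\supseteq T,\,|S|=k}e_S$. Put $U_t=\nabla_{t,k}(M^{t})\subseteq M^{k}$. Since $t+k\le 2k\le n$ whenever $t\le k$, the inclusion matrix $\nabla_{t,k}$ has full rank $\binom{n}{t}$ (a classical full-rank fact for inclusion matrices of a set, due to Gottlieb), so $\dim U_t=\binom{n}{t}$; and as $\nabla_{t,k}$ factors through $\nabla_{t+1,k}$ up to a nonzero scalar, $U_0\subseteq U_1\subseteq\cdots\subseteq U_k=M^{k}$. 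Define $W_s=U_s\cap U_{s-1}^{\perp}$ (with $U_{-1}=0$). Then $M^{k}=\bigoplus_{s=0}^{k}W_s$ is an orthogonal decomposition with $\dim W_s=\binom{n}{s}-\binom{n}{s-1}$.

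Next I would pin down the structure of this decomposition. Each $A_i$ is $S_n$-equivariant (it is the adjacency matrix of a graph invariant under $S_n$), and $\nabla_{t,k}$ is $S_n$-equivariant, so every $U_t$ and hence every $W_s$ is an $S_n$-submodule, preserved by all $A_i$. Now $M^{t}\cong\bigoplus_{m=0}^{t}S^{(n-m,m)}$ is multiplicity-free as an $S_n$-module (a classical computation: $(S_n,S_t\times S_{n-t})$ is a Gelfand pair), so $U_t\cong\bigoplus_{m=0}^{t}S^{(n-m,m)}$ and therefore $W_s\cong S^{(n-s,s)}$, the unique copy of this irreducible in $M^{k}$. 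By Schur's lemma the $S_n$-equivariant operator $A_i$ acts on $W_s$ as a scalar $\alpha_{i,s}$, and $\dim S^{(n-s,s)}=\binom{n}{s}-\binom{n}{s-1}$ gives the claimed multiplicities. (Alternatively, this whole paragraph can be replaced by the standard Bose--Mesner theory of the Johnson scheme, whose primitive idempotents project onto the $W_s$.)

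It remains to identify $\alpha_{i,s}$, which I would extract by projecting onto $W_s$. Fix an $s$-set $T$ and a $k$-set $S'$ with $|S'\cap T|=j$; a direct count gives
$$
\#\{S : T\subseteq S,\ |S|=k,\ |S\cap S'|=i\}=\binom{k-j}{i-j}\binom{n-s-k+j}{k-s-i+j},
$$
so, with $f_j(T)=\sum_{|S'\cap T|=j}e_{S'}$,
$$
A_i\,\nabla_{s,k}(e_T)=\sum_{j=0}^{i}\binom{k-j}{i-j}\binom{n-s-k+j}{k-s-i+j}\,f_j(T).
$$
An inclusion--exclusion over the elements of $T$ writes $f_j(T)$ as an $\mathbb{R}$-combination of the vectors $\nabla_{\mu,k}(e_{T'})$ with $j\le\mu\le s$ and $T'$ ranging over $\mu$-subsets; collecting the top part shows $f_j(T)=(-1)^{s-j}\binom{s}{j}\,\nabla_{s,k}(e_T)+w$ with $w\in U_{s-1}$. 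Letting $\pi_s$ be the orthogonal projection onto $W_s$ (which commutes with every $A_i$), applying $\pi_s$ to the previous display and using $A_i|_{W_s}=\alpha_{i,s}\cdot\mathrm{id}$ gives, after cancelling $\pi_s\nabla_{s,k}(e_T)$ (nonzero for some $T$ because $U_s\neq U_{s-1}$),
$$
\alpha_{i,s}=\sum_{j=0}^{i}(-1)^{s-j}\binom{s}{j}\binom{k-j}{i-j}\binom{n-s-k+j}{k-s-i+j},
$$
which is exactly (\ref{equ-3}) once one observes that the summands with $j>i$ vanish, so the range may be taken up to $j=s$.

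The main obstacle is the last step: getting the inclusion--exclusion bookkeeping to collapse into precisely the alternating binomial sum of (\ref{equ-3}); the structural part (commuting $S_n$-equivariant family, isotypic components, multiplicities) is routine and can be quoted wholesale from the theory of association schemes. Two sanity checks worth running on the formula: $A_k=I$ forces $\alpha_{k,s}=1$ for every $s$, and $\sum_{i=0}^kA_i=J$ forces $\sum_{i=0}^k\alpha_{i,s}=0$ for $s\ge 1$ and $\sum_{i=0}^k\alpha_{i,0}=\binom{n}{k}$.
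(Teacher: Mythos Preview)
Your argument is correct. Note, however, that the paper does not supply its own proof of this lemma: it is quoted as a result of Knuth and used as a black box in the proof of Theorem~\ref{set-spec-1}. So there is no ``paper's proof'' to compare against.

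What you have written is one of the standard derivations. The structural part---the filtration $U_0\subseteq\cdots\subseteq U_k=M^k$ by images of inclusion maps, the orthogonal splitting $M^k=\bigoplus_{s}W_s$ with $W_s\cong S^{(n-s,s)}$, and the use of Schur's lemma to make each $A_i$ act on $W_s$ as a scalar---is exactly the isotypic (equivalently, Bose--Mesner) decomposition of the Johnson scheme that Knuth and Bannai--Ito describe. Your computation of the scalar is also fine: the M\"obius inversion
\[
f_j(T)=\sum_{m=j}^{s}(-1)^{m-j}\binom{m}{j}\sum_{\substack{T'\subseteq T\\|T'|=m}}\nabla_{m,k}(e_{T'})
\]
justifies the claim $f_j(T)\equiv(-1)^{s-j}\binom{s}{j}\,\nabla_{s,k}(e_T)\pmod{U_{s-1}}$, and the rest follows by applying $\pi_s$ as you do. The only cosmetic point is that the upper summation index in your final display can be taken to be either $i$ or $s$ (terms with $j>i$ vanish via $\binom{k-j}{i-j}=0$, terms with $j>s$ via $\binom{s}{j}=0$), so your formula matches~(\ref{equ-3}) on the nose. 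The two sanity checks you list are indeed immediate from the formula.
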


Using Lemma \ref{Johnson-spec}, we now give the proof of Theorem \ref{set-spec-1}.

{\flushleft \bf Proof of Theorem \ref{set-spec-1}.}
Since $G(n,k,l)$ is  bipartite, its adjacency matrix can be written  as
$$A=\left[
\begin{matrix}
0 &B\\
B^T& 0\\
\end{matrix}
\right]
\begin{matrix}
V_1\\
V_2\\
\end{matrix},
$$
where $V_1$ and $V_2$ are given in (\ref{equ-1}).  Note that $|V_1|=\binom{n}{k}$ and $|V_2|=\binom{n}{l}$. We have
\begin{equation}\label{equ-4}
\left|xI-A^2\right|=\left|xI-BB^T\right|\cdot\left|xI-B^TB\right|=x^{\binom{n}{l}-\binom{n}{k}}\cdot \left|xI-BB^T\right|^2,
\end{equation}
where $\binom{n}{k}\leq \binom{n}{l}$ due to $k<l$ and $k+l\leq n$. Note that the spectrum of $G(n,k,l)$ is symmetric about $0$. If  $\xi_1,\xi_2,\ldots,\xi_{\binom{n}{k}}$  are all eigenvalues of $BB^T$, then  from (\ref{equ-4}) we can deduce that  the eigenvalues of $G(n,k,l)$ are $\pm\sqrt{\xi_1},\pm\sqrt{\xi_2},\ldots,\pm\sqrt{\xi_{\binom{n}{k}}}$ and  $\left(\binom{n}{l}-\binom{n}{k}\right)$'s $0$. Thus it suffices to determine the eigenvalues of $BB^T$.

 Recall that $V_1$ and $V_2$ consist of all $k$- and $l$-subsets of $[n]$, respectively. For any two vertices $u,v\in V_1$ (not necessarily distinct), we see that  $(BB^T)_{u,v}$ is just the number of common neighbors of $u,v$ in $V_2$. Then, according to the definition of $G(n,k,l)$, we obtain
$$
(BB^T)_{u,v}=\left\{
\begin{array}{ll}
\binom{n-2k+i}{l-2k+i}& \mbox{if $|u\cap v|=i$ with  $\max\{2k-l,0\}\leq i\leq k$},\\
0& \mbox{otherwise,}\\
\end{array}
\right.
$$
or equivalently,
$$
BB^T=\sum_{i=\max\{2k-l,0\}}^{k}\binom{n-2k+i}{l-2k+i}\cdot A_i,
$$
 where $A_i$ is the adjacency matrix of  $J(n,k,i)$. Therefore, by Lemma \ref{Johnson-spec}, each eigenvalue of  $BB^T$ is of the form
$$\beta_s=\sum_{i=\max\{2k-l,0\}}^{k}\binom{n-2k+i}{l-2k+i}\cdot\alpha_{i,s}$$
with  multiplicity  $\binom{n}{s}-\binom{n}{s-1}$,  where  $\alpha_{i,s}$ is given in (\ref{equ-3}) and  $0\leq s\leq k$. Thus we obtain the spectrum of $G(n,k,l)$ immediately. 
\qed

\begin{remark}\label{rem-1}
\emph{
Since $G(n,k,l)$ is a  semi-regular bipartite graph with parameters $\left(\binom{n}{k},\binom{n}{l},\binom{n-k}{l-k},\binom{l}{k}\right)$, its  largest eigenvalue must be $\sqrt{\binom{n-k}{l-k}\binom{l}{k}}$.  We see that  this eigenvalue is exactly $\sqrt{\beta_0}$  in  Theorem \ref{set-spec-1}. In fact,  from  (\ref{equ-2}) we obtain 
$$
\begin{aligned}
\beta_0&=\sum_{i=\max\{2k-l,0\}}^{k}\binom{n-2k+i}{l-2k+i}\binom{k}{i}\binom{n-k}{k-i}\\
&=\sum_{i=\max\{2k-l,0\}}^{k}\frac{(n-k)!}{(l-2k+i)!(n-l)!(k-i)!}\binom{k}{i}\\
&=\sum_{i=\max\{2k-l,0\}}^{k}\frac{(n-k)!}{(n-l)!(l-k)!}\cdot \frac{(l-k)!}{(l-2k+i)!(k-i)!}\binom{k}{i}\\
&=\binom{n-k}{l-k}\sum_{i=\max\{2k-l,0\}}^{k}\binom{l-k}{k-i}\binom{k}{i}\\
&=\binom{n-k}{l-k}\binom{l}{k},
\end{aligned}
$$
as required.
}
\end{remark}

The following lemma gives the characteristic polynomial of the line graph of a semi-regular bipartite graph.
\begin{lem}\emph{(Cvetkovi\'{c}, Rowlinson and Simi\'{c} \cite[Corollary 2.4.3]{CRS})}\label{line-graph}
If $G$ is a semi-regular bipartite graph with parameters $(n_1,n_2,r_1,r_2)$ $(n_1\leq n_2)$, and if $\lambda_1\geq\lambda_2\geq\cdots\geq\lambda_{n_1}$ are the first $n_1$ largest eigenvalues, then
$$
\begin{aligned}
\phi(\mathcal{L}(G),x)&=(x-r_1-r_2+2)(x-r_2+2)^{n_2-n_1}(x+2)^{(n_2r_2-n_1-n_2+1)}\\
&~~\cdot\prod_{i=2}^{n_1}((x-r_1+2)(x-r_2+2)-\lambda_i^2).
\end{aligned}
$$
\end{lem}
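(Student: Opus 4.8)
\noindent\textbf{Proof plan for Lemma~\ref{line-graph}.}
The plan is to pass through the vertex--edge incidence matrix of $G$. Let $m$ denote the number of edges, so that $m=n_1r_1=n_2r_2$, and let $B$ be the $(n_1+n_2)\times m$ incidence matrix, with rows indexed by $V_1\cup V_2$ and columns by $E(G)$, defined by $B_{v,e}=1$ if $v\in e$ and $B_{v,e}=0$ otherwise. Counting entries gives the two elementary identities
$$
BB^{T}=D+A(G),\qquad B^{T}B=2I_m+A(\mathcal{L}(G)),
$$
where $D$ is the diagonal degree matrix of $G$. Hence, setting $y=x+2$, one has $\phi(\mathcal{L}(G),x)=\det\!\big(yI_m-B^{T}B\big)$, and the standard rank identity $\det(yI_m-B^{T}B)=y^{\,m-n_1-n_2}\det(yI_{n_1+n_2}-BB^{T})$ reduces the problem to evaluating the $(n_1+n_2)$-dimensional determinant $\det\!\big(yI-D-A(G)\big)$. (For connected $G$ one has $m\ge n_1+n_2-1$; the tree case is degenerate and checked directly, and otherwise $m\ge n_1+n_2$.)

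Next I would exploit the bipartite structure. Writing $A(G)=\left(\begin{smallmatrix}0&N\\ N^{T}&0\end{smallmatrix}\right)$ with $N$ the $n_1\times n_2$ biadjacency matrix, and $D=\operatorname{diag}\!\big(r_1I_{n_1},\,r_2I_{n_2}\big)$, we get
$$
yI_{n_1+n_2}-D-A(G)=\begin{pmatrix}(y-r_1)I_{n_1}&-N\\ -N^{T}&(y-r_2)I_{n_2}\end{pmatrix},
$$
and taking the Schur complement of the block $(y-r_2)I_{n_2}$ yields
$$
\det\!\big(yI-D-A(G)\big)=(y-r_2)^{\,n_2-n_1}\,\det\!\Big((y-r_1)(y-r_2)I_{n_1}-NN^{T}\Big).
$$
It then remains to read off the spectrum of $NN^{T}$: since $A(G)^{2}=\operatorname{diag}(NN^{T},N^{T}N)$ and the spectrum of $A(G)$ is $\{\pm\lambda_1,\dots,\pm\lambda_{n_1}\}\cup\{0^{\,n_2-n_1}\}$, the eigenvalues of $NN^{T}$ are exactly $\lambda_1^{2},\dots,\lambda_{n_1}^{2}$, so the last determinant equals $\prod_{i=1}^{n_1}\big((y-r_1)(y-r_2)-\lambda_i^{2}\big)$.

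Finally, connectivity of $G$ makes $\lambda_1=\sqrt{r_1r_2}$ a simple eigenvalue, and the $i=1$ factor collapses to $(y-r_1)(y-r_2)-r_1r_2=y\,(y-r_1-r_2)$. Collecting the two sources of powers of $y$ (this new factor and the $y^{\,m-n_1-n_2}$ from the rank identity) and recalling $m=n_2r_2$ gives
$$
\phi(\mathcal{L}(G),x)=y^{\,n_2r_2-n_1-n_2+1}\,(y-r_1-r_2)\,(y-r_2)^{\,n_2-n_1}\prod_{i=2}^{n_1}\big((y-r_1)(y-r_2)-\lambda_i^{2}\big),
$$
which is the stated formula once $y=x+2$ is substituted back. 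The argument is essentially routine once the incidence identities are in place; the only point that needs a word of care is that the Schur-complement step presupposes $(y-r_2)I_{n_2}$ to be invertible. This is handled by performing the manipulation over the field $\mathbb{Q}(y)$ and then invoking the fact that two polynomials in $y$ agreeing for all but finitely many values must be identical, so the resulting identity holds for all $y$ (in particular at $y=r_2$).
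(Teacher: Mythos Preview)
The paper does not prove this lemma at all: it is quoted verbatim as Corollary~2.4.3 from the textbook of Cvetkovi\'{c}, Rowlinson and Simi\'{c}, so there is no in-paper argument to compare against. Your proof is correct and follows exactly the standard route one finds in that reference: the incidence identities $BB^{T}=D+A(G)$ and $B^{T}B=2I_m+A(\mathcal{L}(G))$, the determinant transfer $\det(yI_m-B^{T}B)=y^{m-n_1-n_2}\det(yI_{n_1+n_2}-BB^{T})$, the Schur complement on the bipartite block form, and the identification of the eigenvalues of $NN^{T}$ with $\lambda_1^2,\dots,\lambda_{n_1}^2$ via the singular values of $N$. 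Two very minor remarks: (i) you do not actually need connectivity to pull off the $i=1$ factor---the identity $\lambda_1^2=r_1r_2$ holds for any semi-regular bipartite graph, simple or not, and that is all the factorisation $(y-r_1)(y-r_2)-\lambda_1^2=y(y-r_1-r_2)$ uses; (ii) in the tree case $m=n_1+n_2-1$ the exponent $m-n_1-n_2$ is $-1$, but since $\det(yI-BB^{T})$ carries a factor of $y$ (from that same $i=1$ term), the final exponent on $y=x+2$ is $0$ and the formula remains a genuine polynomial identity, so no separate verification is really needed.
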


\begin{cor}\label{set-spec-2}
Let $n$, $k$ and $l$ be integers with $1\leq k<l\leq n-1$ and $k+l\leq n$. Then the spectrum of the line graph $\mathcal{L}(G(n,k,l))$ is given by
$$
\begin{aligned}
&\left\{\left[\binom{n-k}{l-k}+\binom{l}{k}-2\right]^1,\left[\binom{l}{k}-2\right]^{\binom{n}{l}-\binom{n}{k}},[-2]^{\binom{n}{l}\binom{l}{k}-\binom{n}{k}-\binom{n}{l}+1}\right\}\\
&~~~~\cup \left\{[\gamma_{s,1}]^{\binom{n}{s}-\binom{n}{s-1}},[\gamma_{s,2}]^{\binom{n}{s}-\binom{n}{s-1}}\mid 1\leq s\leq k\right\},
\end{aligned}
$$
where $\gamma_{s,1}, \gamma_{s,2}$ are the two roots of  $\left(x-\binom{n-k}{l-k}+2\right)\cdot \left(x-\binom{l}{k}+2\right)-\beta_s=0$ with $\beta_s$  shown in (\ref{equ-2}).
\end{cor}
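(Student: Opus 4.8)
The plan is to combine Theorem~\ref{set-spec-1} with the line-graph formula in Lemma~\ref{line-graph}. Recall from Section~\ref{s-1} that $G(n,k,l)$ is a connected semi-regular bipartite graph with parameters $(n_1,n_2,r_1,r_2)=\bigl(\binom{n}{k},\binom{n}{l},\binom{n-k}{l-k},\binom{l}{k}\bigr)$, and that $n_1\le n_2$ because $k<l$ and $k+l\le n$. Hence Lemma~\ref{line-graph} applies once we identify the $n_1$ largest eigenvalues $\lambda_1\ge\cdots\ge\lambda_{n_1}$ of $G(n,k,l)$.

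First I would read off these $\lambda_i$ from Theorem~\ref{set-spec-1}. Each $\beta_s$ is a square of an eigenvalue of $G(n,k,l)$, hence $\beta_s\ge 0$; and by Remark~\ref{rem-1} we have $\beta_0=\binom{n-k}{l-k}\binom{l}{k}=r_1r_2$, so $\sqrt{\beta_0}=\sqrt{r_1r_2}$ is the largest eigenvalue of the connected semi-regular bipartite graph $G(n,k,l)$, i.e. $\lambda_1=\sqrt{\beta_0}$. Since the nonnegative eigenvalues of $G(n,k,l)$ are exactly the values $\sqrt{\beta_s}$ ($0\le s\le k$), each of multiplicity $\binom{n}{s}-\binom{n}{s-1}$, together with possibly some extra zeros, and since $\sum_{s=0}^{k}\bigl(\binom{n}{s}-\binom{n}{s-1}\bigr)$ telescopes to $\binom{n}{k}=n_1$, the multiset $\{\lambda_2,\dots,\lambda_{n_1}\}$ consists precisely of $\sqrt{\beta_s}$ with multiplicity $\binom{n}{s}-\binom{n}{s-1}$ for $1\le s\le k$. (If some $\beta_s=0$ this list merely absorbs the corresponding zeros, which is harmless because the formula in Lemma~\ref{line-graph} only involves the $\lambda_i^2$.)

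Next I would substitute into Lemma~\ref{line-graph}. The product $\prod_{i=2}^{n_1}\bigl((x-r_1+2)(x-r_2+2)-\lambda_i^2\bigr)$ then becomes $\prod_{s=1}^{k}\bigl[(x-r_1+2)(x-r_2+2)-\beta_s\bigr]^{\binom{n}{s}-\binom{n}{s-1}}$, whose zeros are the two roots $\gamma_{s,1},\gamma_{s,2}$ of $\bigl(x-\binom{n-k}{l-k}+2\bigr)\bigl(x-\binom{l}{k}+2\bigr)-\beta_s=0$, each of multiplicity $\binom{n}{s}-\binom{n}{s-1}$. The three remaining factors contribute the eigenvalue $r_1+r_2-2=\binom{n-k}{l-k}+\binom{l}{k}-2$ with multiplicity $1$, the eigenvalue $r_2-2=\binom{l}{k}-2$ with multiplicity $n_2-n_1=\binom{n}{l}-\binom{n}{k}$, and the eigenvalue $-2$ with multiplicity $n_2r_2-n_1-n_2+1=\binom{n}{l}\binom{l}{k}-\binom{n}{k}-\binom{n}{l}+1$. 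Collecting these gives exactly the claimed spectrum, and as a consistency check the multiplicities sum to $n_2r_2=\binom{n}{l}\binom{l}{k}$, the number of edges of $G(n,k,l)$, i.e. the number of vertices of $\mathcal{L}(G(n,k,l))$.

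I do not expect a genuine obstacle here, since this is essentially a direct application of two results already available. The one point requiring a line of care is the identification of the ``first $n_1$ largest'' eigenvalues of $G(n,k,l)$ with the list $\{\sqrt{\beta_s}:1\le s\le k\}$ (with multiplicities), together with $\lambda_1=\sqrt{\beta_0}$ — in particular verifying that no negative eigenvalue and no surplus zero can climb into the top $n_1$ — and this follows from $\beta_s\ge 0$, from Remark~\ref{rem-1}, and from the telescoping count $\sum_{s=0}^{k}\bigl(\binom{n}{s}-\binom{n}{s-1}\bigr)=\binom{n}{k}$.
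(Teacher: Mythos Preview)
Your proposal is correct and follows exactly the same route as the paper's proof: both recall that $G(n,k,l)$ is semi-regular bipartite with the stated parameters, identify the top $\binom{n}{k}$ eigenvalues as $\sqrt{\beta_s}$ (with $\sqrt{\beta_0}$ largest via Remark~\ref{rem-1}), and then plug directly into Lemma~\ref{line-graph}. Your write-up is actually a bit more careful than the paper's about justifying that the $\sqrt{\beta_s}$ exhaust the top $n_1$ eigenvalues (via the telescoping count), but the approach is identical.
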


\begin{proof} 
Recall that $G(n,k,l)$ is a connected semi-regular bipartite graph with parameters $\left(\binom{n}{k},\binom{n}{l},\binom{n-k}{l-k},\binom{l}{k}\right)$, where $\binom{n}{k}\leq \binom{n}{l}$ due to $k<l$ and $k+l\leq n$. By Theorem \ref{set-spec-1}, the first $\binom{n}{k}$ largest eigenvalues of $G(n,k,l)$ are $\sqrt{\beta_s}$ (with multiplicity $\binom{n}{s}-\binom{n}{s-1}$), $s=0,1,\ldots,k$. In particular, the largest eigenvalue is $\sqrt{\beta_0}=\sqrt{\binom{n-k}{l-k}\binom{l}{k}}$ according to Remark \ref{rem-1}. By  Lemma \ref{line-graph},   $\mathcal{L}(G(n,k,l))$ has  characteristic polynomial 
\begin{equation*}
\begin{aligned}
&~~~~\phi(\mathcal{L}(G(n,k,l)),x)\\
&=\left(x-\binom{n-k}{l-k}-\binom{l}{k}+2\right)\cdot \left(x-\binom{l}{k}+2\right)^{\binom{n}{l}-\binom{n}{k}}\cdot (x+2)^{\binom{n}{l}\binom{l}{k}-\binom{n}{k}-\binom{n}{l}+1}\\
&~~~~\cdot\prod_{s=1}^{k}\left(\left(x-\binom{n-k}{l-k}+2\right)\cdot \left(x-\binom{l}{k}+2\right)-\beta_s\right)^{\binom{n}{s}-\binom{n}{s-1}},
\end{aligned}
\end{equation*}
and our result follows.
\end{proof}

By Theorem \ref{set-spec-1} and Corollary \ref{set-spec-2}, we can deduce the spectra of the  graph $B(n,k)$ and its line graph $\mathcal{L}(B(n,k))$, respectively.

\begin{cor}\label{layer-spec-1}
Let $n$ and $k$ be integers with  $1\leq k\leq \frac{n-1}{2}$. Then the spectrum of  $B(n,k)$ is
$$
\left\{\left[\pm\sqrt{(n-k-s)(k+1-s)}\right]^{\binom{n}{s}-\binom{n}{s-1}}\mid 0\leq s\leq k\right\}\cup \left\{\left[0\right]^{\binom{n}{k+1}-\binom{n}{k}}\right\}.
$$
\end{cor}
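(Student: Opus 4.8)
The plan is to recognize that $B(n,k)$ is exactly the set-inclusion graph $G(n,k,k+1)$, and that the hypothesis $1\le k\le\frac{n-1}{2}$ is precisely the condition $k+l\le n$ with $l=k+1$. Hence Theorem \ref{set-spec-1} applies verbatim, and the shape of the claimed spectrum — the eigenvalues $\pm\sqrt{\beta_s}$ with multiplicity $\binom{n}{s}-\binom{n}{s-1}$ for $0\le s\le k$ together with $0$ of multiplicity $\binom{n}{k+1}-\binom{n}{k}$ — follows immediately, so that it only remains to show $\beta_s=(n-k-s)(k+1-s)$.

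To evaluate $\beta_s$ from (\ref{equ-2}) with $l=k+1$, first note that $2k-l=k-1\ge 0$, so the outer sum runs only over $i\in\{k-1,k\}$. The two coefficients are $\binom{n-2k+(k-1)}{(k+1)-2k+(k-1)}=\binom{n-k-1}{0}=1$ and $\binom{n-2k+k}{(k+1)-2k+k}=\binom{n-k}{1}=n-k$, so $\beta_s=\alpha_{k-1,s}+(n-k)\,\alpha_{k,s}$, with $\alpha_{i,s}$ as in (\ref{equ-3}).

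Next I would compute the two numbers $\alpha_{k,s}$ and $\alpha_{k-1,s}$. For $\alpha_{k,s}$, in (\ref{equ-3}) the factor $\binom{n-k-s+r}{k-i-s+r}=\binom{n-k-s+r}{r-s}$ vanishes unless $r=s$, leaving $\alpha_{k,s}=1$; alternatively this is clear since $J(n,k,k)$ has adjacency matrix $I$. For $\alpha_{k-1,s}$, the factor $\binom{k-r}{i-r}=\binom{k-r}{k-1-r}=k-r$ and the factor $\binom{n-k-s+r}{1-s+r}$ force $r\in\{s-1,s\}$; carrying out this two-term sum gives the classical Johnson-graph eigenvalue $\alpha_{k-1,s}=(k-s)(n-k-s)-s$ (valid also at $s=0$ and $s=k$). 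Substituting, $\beta_s=(k-s)(n-k-s)-s+(n-k)=(n-k-s)(k+1-s)$ after a one-line factoring, as claimed.

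There is essentially no hard step: the argument is a direct specialization of Theorem \ref{set-spec-1}, and the only points needing a little care are verifying that exactly two values of $i$ contribute and tracking the boundary cases $s=0$ and $s=k$ when collapsing the sums defining $\alpha_{k,s}$ and $\alpha_{k-1,s}$.
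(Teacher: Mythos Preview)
Your proof is correct and follows essentially the same route as the paper's: specialize Theorem~\ref{set-spec-1} at $l=k+1$, observe that the outer sum in (\ref{equ-2}) collapses to $i\in\{k-1,k\}$ with coefficients $1$ and $n-k$, and evaluate the two resulting inner sums to obtain $\beta_s=(n-k-s)(k+1-s)$. You spell out the computation of $\alpha_{k,s}=1$ and $\alpha_{k-1,s}=(k-s)(n-k-s)-s$ a bit more explicitly than the paper does, but the argument is the same.
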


\begin{proof}
As $B(n,k)= G(n,k,k+1)$,  by setting $l=k+1$ in (\ref{equ-2}), we have
\begin{equation}\label{equ-5}
\begin{aligned}
\beta_s&=\sum_{i=k-1}^{k}\binom{n-2k+i}{i-k+1}\cdot\left(\sum_{r=0}^s(-1)^{s-r}\binom{s}{r}\binom{k-r}{i-r}\binom{n-k-s+r}{k-i-s+r}\right)\\
&=\sum_{r=0}^s(-1)^{s-r}\binom{s}{r}\binom{k-r}{k-r-1}\binom{n-k-s+r}{1-s+r}+\\
&~~~~(n-k)\cdot\sum_{r=0}^s(-1)^{s-r}\binom{s}{r}\binom{k-r}{k-r}\binom{n-k-s+r}{-s+r}\\
&=(n-k-s)(k+1-s),
\end{aligned}
\end{equation}
and the result follows from  Theorem \ref{set-spec-1} immediately.
\end{proof}

If $l=k+1$, then 
$\left(x-\binom{n-k}{l-k}+2\right)\cdot \left(x-\binom{l}{k}+2\right)-\beta_s=(x-s+2)(x-n+s+1)$ according to (\ref{equ-5}). By  Theorem \ref{set-spec-2}, we obtain the following result immediately.

\begin{cor}\label{layer-spec-2}
Let $n$ and $k$ be integers with  $1\leq k\leq \frac{n-1}{2}$. Then the line graph $\mathcal{L}(B(n,k))$ is an integral graph with  spectrum
$$
\begin{aligned}
&\left\{[n-1]^1,[k-1]^{\binom{n}{k+1}-\binom{n}{k}},[-2]^{k\binom{n}{k+1}-\binom{n}{k}+1}\right\}\\
&~~\cup \left\{[s-2]^{\binom{n}{s}-\binom{n}{s-1}},[n-s-1]^{\binom{n}{s}-\binom{n}{s-1}}\mid 1\leq s\leq k\right\}.
\end{aligned}
$$
\end{cor}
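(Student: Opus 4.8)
\textbf{Proof proposal for Corollary \ref{layer-spec-2}.}
The plan is to specialize Corollary \ref{set-spec-2} to the case $l=k+1$, since $B(n,k)=G(n,k,k+1)$ and the hypothesis $1\le k\le\frac{n-1}{2}$ is exactly $1\le k<l\le n-1$ together with $k+l\le n$. First I would record the three elementary evaluations that feed into Corollary \ref{set-spec-2}: $\binom{n-k}{l-k}=\binom{n-k}{1}=n-k$, $\binom{l}{k}=\binom{k+1}{k}=k+1$, and, by (\ref{equ-5}) (already verified in the proof of Corollary \ref{layer-spec-1}), $\beta_s=(n-k-s)(k+1-s)$ for $0\le s\le k$. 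I would also rewrite the three ``generic'' multiplicities appearing in Corollary \ref{set-spec-2} in terms of $k$: $\binom{n}{l}-\binom{n}{k}=\binom{n}{k+1}-\binom{n}{k}$, and $\binom{n}{l}\binom{l}{k}-\binom{n}{k}-\binom{n}{l}+1=(k+1)\binom{n}{k+1}-\binom{n}{k}-\binom{n}{k+1}+1=k\binom{n}{k+1}-\binom{n}{k}+1$.

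Substituting these values, the three isolated eigenvalues of $\mathcal{L}(G(n,k,l))$ from Corollary \ref{set-spec-2} become $\binom{n-k}{l-k}+\binom{l}{k}-2=(n-k)+(k+1)-2=n-1$ (multiplicity $1$), $\binom{l}{k}-2=k-1$ (multiplicity $\binom{n}{k+1}-\binom{n}{k}$), and $-2$ (multiplicity $k\binom{n}{k+1}-\binom{n}{k}+1$), which already match the claimed first line of the spectrum. It remains to handle the families $\gamma_{s,1},\gamma_{s,2}$, which are the roots of $\bigl(x-\binom{n-k}{l-k}+2\bigr)\bigl(x-\binom{l}{k}+2\bigr)-\beta_s=0$, i.e.\ of
\[
\bigl(x-(n-k)+2\bigr)\bigl(x-(k+1)+2\bigr)-(n-k-s)(k+1-s)=0 .
\]
The key step is the algebraic identity
\[
\bigl(x-n+k+2\bigr)(x-k+1)-(n-k-s)(k+1-s)=(x-s+2)(x-n+s+1),
\]
which I would verify by comparing coefficients: the $x^2$ coefficients are both $1$, the $x$ coefficients on both sides equal $3-n$ (the parameter $k$ cancels on the left and $s$ cancels on the right), and the constant terms both equal $sn-s^2+s-2n+2$ after expansion. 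Hence $\{\gamma_{s,1},\gamma_{s,2}\}=\{s-2,\,n-s-1\}$, each with multiplicity $\binom{n}{s}-\binom{n}{s-1}$ for $1\le s\le k$, giving the second line of the claimed spectrum.

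Finally, since every eigenvalue in the resulting list — namely $n-1$, $k-1$, $-2$, and $s-2$, $n-s-1$ for $1\le s\le k$ — is an integer, the graph $\mathcal{L}(B(n,k))$ is integral. I do not expect any genuine obstacle here: the only non-bookkeeping part is the quadratic factorization above, which is a one-line coefficient comparison, and the multiplicity rewritings are routine binomial manipulations. Care should be taken only to confirm that the parameter ranges line up (in particular that $\binom{n}{k}\le\binom{n}{k+1}$ under $k\le\frac{n-1}{2}$, so that Corollary \ref{set-spec-2} applies with $V_1$ the smaller side), which is immediate. \qed
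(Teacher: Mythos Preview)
Your proposal is correct and follows exactly the paper's approach: specialize Corollary~\ref{set-spec-2} to $l=k+1$, use $\beta_s=(n-k-s)(k+1-s)$ from (\ref{equ-5}), and factor the quadratic as $(x-s+2)(x-n+s+1)$. The paper states this factorization in a single line just before the corollary and appeals to Corollary~\ref{set-spec-2}; you have simply written out the coefficient comparison and multiplicity bookkeeping that the paper leaves implicit.
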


Putting $k=1$ in  Corollary \ref{layer-spec-2},  we obtain the main result of Mirafzal \cite{M1}.
\begin{cor}[Mirafzal \cite{M1}]\label{layer-spec-3}
Let $n\geq 4$ be an integer. Then the graph $\mathcal{L}(B(n,1))$ is an integral graph with distinct eigenvalues $-2$, $-1$, $0$, $n-2$, $n-1$.
\end{cor}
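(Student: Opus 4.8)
The plan is to obtain Corollary~\ref{layer-spec-3} as the special case $k=1$ of Corollary~\ref{layer-spec-2}. Since $B(n,1)=G(n,1,2)$ and the condition $1\le k\le\frac{n-1}{2}$ holds with $k=1$ for every $n\ge 3$, Corollary~\ref{layer-spec-2} already guarantees that $\mathcal{L}(B(n,1))$ is integral; what remains is to read off the list of distinct eigenvalues and to check that each of them really occurs once $n\ge 4$.

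First I would substitute $k=1$ into the spectrum displayed in Corollary~\ref{layer-spec-2}. The first brace becomes $[n-1]^{1}$, together with $[k-1]^{\binom{n}{k+1}-\binom{n}{k}}=[\,0\,]^{\binom{n}{2}-n}$ and $[-2]^{k\binom{n}{k+1}-\binom{n}{k}+1}=[-2]^{\binom{n}{2}-n+1}$. The indexed family runs over the single index $s=1$, which contributes $[s-2]^{\binom{n}{s}-\binom{n}{s-1}}=[-1]^{n-1}$ and $[n-s-1]^{\binom{n}{s}-\binom{n}{s-1}}=[n-2]^{n-1}$. Hence every eigenvalue of $\mathcal{L}(B(n,1))$ belongs to $\{n-1,\ n-2,\ 0,\ -1,\ -2\}$.

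Next I would check that, for $n\ge 4$, all five of these values are genuine eigenvalues and are pairwise distinct. Pairwise distinctness follows at once from the chain $n-1>n-2\ge 2>0>-1>-2$, which holds whenever $n\ge 4$. The multiplicities of $n-1$, $n-2$, $-1$ and $-2$ are clearly positive; the only value needing attention is $0$, whose multiplicity equals $\binom{n}{2}-n=\frac{n(n-3)}{2}$, and this is at least $2$ exactly because $n\ge 4$. This is precisely where the hypothesis $n\ge 4$ enters: for $n=3$ the eigenvalue $0$ disappears and indeed $\mathcal{L}(B(3,1))=\mathcal{L}(C_6)=C_6$ has only the four distinct eigenvalues $2,1,-1,-2$. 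Putting these facts together yields the asserted set $\{-2,-1,0,n-2,n-1\}$ of distinct eigenvalues.

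There is essentially no genuine obstacle here: the corollary is a direct specialization of Corollary~\ref{layer-spec-2}, and the only point requiring a moment's care is the bookkeeping of the multiplicity of the eigenvalue $0$, which explains why the correct threshold is $n\ge 4$ rather than $n\ge 3$.
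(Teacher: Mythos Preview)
Your proof is correct and follows exactly the paper's approach: the paper simply states that the result is obtained by putting $k=1$ in Corollary~\ref{layer-spec-2}, and you carry out precisely this specialization, with the added care of verifying the multiplicities and explaining why the hypothesis $n\ge 4$ is needed for the eigenvalue $0$ to appear.
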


\section{The automorphism group  of $G(n,k,l)$}\label{s-3}

Let $S_n$ be the symmetric group on $[n]=\{1,2,\ldots,n\}$ with $n\geq 3$.  For each  $i$ with $1\leq i\leq n-1$, we denote by  $N_i$ the set of all $i$-subsets of $[n]$.  Suppose 
$$N=\cup_{i\in I}N_i,$$
where $I$ is some nonempty subset of $\{1,2,\ldots,n-1\}$. For any fixed $g\in S_n$, we define $\sigma_g$ as the mapping
\begin{equation}\label{equ-6}
\begin{aligned}
\sigma_g: N&\rightarrow N\\
v&\mapsto v^g=\{x^g\mid x\in v\}, \text{ for } v\in N.
\end{aligned}
\end{equation}
Clearly, $\sigma_g$ is one-to-one and onto. Denote by
\begin{equation}\label{equ-7}
\tilde{S}_n=\{\sigma_g\mid g\in S_n\}.
\end{equation}
It is not hard to verify that $\tilde{S}_n$ is  a permutation group on $N$ which is isomorphic to $S_n$. If the set $N=\cup_{i\in I}N_i$ satisfies $I=n-I=\{n-i\mid i\in I\}$, we  define
\begin{equation}\label{equ-8}
\begin{aligned}
\tau: N&\rightarrow N\\
v&\mapsto \bar{v}=[n]\setminus v, \text{ for } v\in N.
\end{aligned}
\end{equation}
It is clear that $\tau$ is a permutation  of order $2$ on $N$, and $\tau\not\in \tilde{S}_n$ due to $n\geq 3$.

By above arguments,  we see that $\tilde{S}_n$ is a permutation group on the vertex set $V=V_1\cup V_2=N_k\cup N_l$ of the set-inclusion graph $G(n,k,l)$, where $1\leq k<l\leq n-1$ and $k+l\leq n$. Let $\sigma_g\in \tilde{S}_n$.  For any two vertices $u,v\in V$, we have
$$
\begin{aligned}
\{u,v\}\in E&\Longleftrightarrow \mbox{$u\subset v$ or $v\subset u$}\\
&\Longleftrightarrow \mbox{$\sigma_g(u)=u^g\subset v^g=\sigma_g(v)$ or $\sigma_g(v)=v^g\subset u^g=\sigma_g(u)$}\\
&\Longleftrightarrow \mbox{$\{\sigma_g(u),\sigma_g(v)\}\in E$},
\end{aligned}
$$
which implies that $\sigma_g$ is an automorphism of $G(n,k,l)$. Thus we conclude that
$$\tilde{S}_n\leq \mathrm{Aut}(G(n,k,l)).$$

Furthermore, if $l=n-k$, then $G(n,k,n-k)$ has vertex set $V=V_1\cup V_2=N_k\cup N_{n-k}$, and so the mapping $\tau$ defined in (\ref{equ-8}) is a permutation on $V$. For any two vertices $u,v\in V$, we obtain
$$
\begin{aligned}
\{u,v\}\in E&\Longleftrightarrow \mbox{$u\subset v$ or $v\subset u$}\\
&\Longleftrightarrow \mbox{$\tau(u)=\bar{u}\supset \bar{v}=\tau(v)$ or $\tau(v)=\bar{v}\supset \bar{u}=\tau(u)$}\\
&\Longleftrightarrow \mbox{$\{\tau(u),\tau(v)\}\in E$},
\end{aligned}
$$
implying that $\tau$ is also an automorphism of $G(n,k,n-k)$. As $1\leq k<l=n-k$, we have $n\geq 3$, and  $\tau\not\in \tilde{S}_n$. Additionally, one can  check that $\tau\cdot \sigma_g=\sigma_g\cdot \tau$ for all $\sigma_g\in \tilde{S}_n$. Therefore, we have $$\tilde{S}_n\times \langle\tau\rangle\leq \mathrm{Aut}(G(n,k,n-k)).$$

The result of Theorem \ref{set-aut-1} suggests that  the two  subgroups mentioned above are in fact the full automorphism group of $G(n,k,l)$  for $l<n-k$ and $l=n-k$, respectively. Before giving the proof, we need some powerful lemmas.

Recall that the Johnson graph $J(n,k)=J(n,k,k-1)$ ($1\leq k\leq \frac{n}{2}$) is the graph whose vertex set consists of all $k$-subsets of $[n]$, where two  vertices  are adjacent if their intersection has size $k-1$ (see Section \ref{s-2}).  As above, we see that $\mathrm{Aut}(J(n,k))$  contains $\tilde{S}_n$ as its subgroup, where  $\tilde{S}_n$ is given in (\ref{equ-7}). Indeed, we have

\begin{lem}\label{Johnson-aut}\emph{(Brouwer, Cohen and Neumaier \cite[Theorem 9.1.2]{BCN})}
Let $n$ and $k$ be  integers with  $1\leq k\leq \frac{n}{2}$. Then the automorphism group of the Johnson graph $J(n,k)$ is
$$
\mathrm{Aut}(J(n,k))=
\left\{
\begin{array}{ll}
\tilde{S}_n\times\langle\tau\rangle\cong S_n\times \mathbb{Z}_2 & \mbox{if $k=\frac{n}{2}\geq 2$},\\
\tilde{S}_n\cong S_n& \mbox{otherwise},
\end{array}
\right.
$$
where $\tilde{S}_n$ and $\tau$ are given in (\ref{equ-7}) and (\ref{equ-8}), respectively.
\end{lem}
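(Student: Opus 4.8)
The plan is to prove both inclusions, the nontrivial one being $\mathrm{Aut}(J(n,k))\le \tilde S_n$ (resp. $\tilde S_n\times\langle\tau\rangle$ when $k=\frac n2$). The inclusion $\tilde S_n\le \mathrm{Aut}(J(n,k))$ is immediate from the same computation used for $G(n,k,l)$ in Section \ref{s-3}, since each $\sigma_g$ preserves the relation $|u\cap v|=k-1$. When $n=2k$ one checks that complementation $\tau$ of (\ref{equ-8}) is also an automorphism: if $|u\cap v|=k-1$ then $|u\cup v|=k+1$, whence $|\bar u\cap\bar v|=n-(k+1)$, which equals $k-1$ exactly when $n=2k$; moreover $\tau$ has order $2$, lies outside $\tilde S_n$, and commutes with every $\sigma_g$, giving $\tilde S_n\times\langle\tau\rangle\le \mathrm{Aut}(J(n,k))$. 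The whole difficulty is to show these are all the automorphisms, and for this I would assume $2\le k\le n-2$, the excluded extreme $k=1$ giving $J(n,1)=K_n$ with $\mathrm{Aut}=S_n=\tilde S_n$ and serving as the base of an induction on $k$.

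The key structural input is a classification of the maximal cliques. In $J(n,k)$ there are two natural families: the type-A clique $A_S$ of all $k$-sets containing a fixed $(k-1)$-set $S$ (of size $n-k+1$), and the type-B clique $B_T$ of all $k$-sets contained in a fixed $(k+1)$-set $T$ (of size $k+1$). First I would prove a triangle lemma: if $u,v,w$ are pairwise adjacent, then either their common intersection has size $k-1$ (so they lie in a common $A_S$) or their union has size $k+1$ (so they lie in a common $B_T$); this reduces to a short analysis of how the size-$2$ symmetric differences $u\triangle v$, $v\triangle w$, $u\triangle w$ interact. From this I would deduce that every maximal clique is some $A_S$ or $B_T$, so that any $\phi\in\mathrm{Aut}(J(n,k))$ permutes the set of maximal cliques.

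Next I would use a cardinality argument. When $n\ne 2k$ the two families have distinct sizes $n-k+1\ne k+1$, so $\phi$ must send type-A cliques to type-A cliques. Identifying $A_S$ with the $(k-1)$-set $S$, and noting that distinct $A_S,A_{S'}$ share a (unique) vertex precisely when $|S\cap S'|=k-2$, the induced action of $\phi$ on type-A cliques is an automorphism of the graph $J(n,k-1)$. Since $k<\frac n2$ forces $k-1<\frac n2$, the inductive hypothesis gives that this induced map equals some $\sigma_g\in\tilde S_n$. Finally, because each $k$-set $u$ is the union of its $(k-1)$-subsets, the vertex $u$ is the unique common vertex of the type-A cliques through it; tracking these cliques through $\phi$ then forces $u^\phi=u^g$ for every $u$, i.e. $\phi=\sigma_g\in\tilde S_n$.

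It remains to treat $n=2k\ge4$, where $\tau$ interchanges the two families, now of equal size $k+1$. Here an automorphism $\phi$ either preserves both families or swaps them; in the latter case $\tau\phi$ preserves them. Since $k-1<\frac n2$, the argument of the previous paragraph applies to whichever of $\phi$ or $\tau\phi$ preserves the families, yielding $\phi\in\tilde S_n$ or $\tau\phi\in\tilde S_n$, hence $\phi\in\tilde S_n\times\langle\tau\rangle$ in all cases. The main obstacle I anticipate is the triangle/maximal-clique classification: making the dichotomy airtight requires ruling out ``mixed'' triangles and verifying that a maximal clique cannot meet both families, which is where the combinatorial case analysis concentrates; once that is secured, the cardinality-plus-induction machinery runs smoothly.
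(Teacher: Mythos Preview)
The paper does not give its own proof of this lemma: it is quoted verbatim as a known result from Brouwer, Cohen and Neumaier \cite[Theorem 9.1.2]{BCN}, and the paper simply uses it as a black box in the proof of Theorem~\ref{set-aut-1}. So there is nothing to compare your argument against within the paper itself.

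That said, your sketch is essentially the classical proof and is correct in outline. The triangle lemma and the resulting classification of maximal cliques into the two families $A_S$ and $B_T$ are standard, and the induction on $k$ via the action on type-$A$ cliques (identified with $J(n,k-1)$) is exactly how one usually proceeds. Two small points are worth tightening. First, in the case $n=2k$ you assert without justification that an automorphism either preserves both families or swaps them globally; this does require an argument, since all maximal cliques now have the same size $k+1$. The cleanest fix is to observe that two maximal cliques of the \emph{same} type meet in $0$ or $1$ vertices, whereas an $A_S$ and a $B_T$ meet in $0$ or $2$ vertices (the latter exactly when $S\subset T$); since $\phi$ preserves intersection sizes and $J(n,k-1)$ is connected, the image of the type-$A$ family is forced to be entirely of one type. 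Second, when you reconstruct $\phi$ on vertices from its action on type-$A$ cliques, you implicitly use $k\ge 2$ so that a $k$-set really is the union of its $(k{-}1)$-subsets; this is fine since $k=1$ is your base case.
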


The next lemma is straightforward.

\begin{lem}\label{bipartite}
Let $G$ be a connected bipartite graph with bipartition $V=V_1\cup V_2$. For each $\sigma\in \mathrm{Aut}(G)$,  we have  either $\sigma(V_1)=V_1$ and $\sigma(V_2)=V_2$, or $\sigma(V_1)=V_2$ and $\sigma(V_2)=V_1$.
\end{lem}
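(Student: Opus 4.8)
The plan is to use the uniqueness of the bipartition of a connected bipartite graph, combined with the fact that automorphisms are distance-preserving. First I would fix an arbitrary vertex $v\in V_1$ and record the standard parity characterization of the two parts: since $G$ is bipartite, any two walks joining a given pair of vertices have the same length parity, so for each $u\in V$ one has $u\in V_1$ exactly when $d_G(v,u)$ is even and $u\in V_2$ exactly when $d_G(v,u)$ is odd. Here connectedness of $G$ is used precisely to guarantee that $d_G(v,u)<\infty$ for every $u$, so this really partitions $V$.

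Next, let $\sigma\in\mathrm{Aut}(G)$ be arbitrary. Since $\sigma$ preserves the edge relation, it preserves graph distance, i.e.\ $d_G(\sigma(x),\sigma(y))=d_G(x,y)$ for all $x,y\in V$. I would then split into two cases according to the image of the base vertex. If $\sigma(v)\in V_1$, then applying the parity characterization with base point $\sigma(v)$ gives, for every $u\in V$,
$$
u\in V_1 \iff d_G(v,u)\ \text{even} \iff d_G(\sigma(v),\sigma(u))\ \text{even} \iff \sigma(u)\in V_1,
$$
so that $\sigma(V_1)\subseteq V_1$ and $\sigma(V_2)\subseteq V_2$; because $\sigma$ is a bijection of $V=V_1\cup V_2$, both inclusions are in fact equalities. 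If instead $\sigma(v)\in V_2$, the identical computation with the roles of $V_1$ and $V_2$ interchanged yields $\sigma(V_1)=V_2$ and $\sigma(V_2)=V_1$, which is the second alternative in the statement.

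There is no genuine obstacle here: the lemma is essentially a one-line consequence of the fact that a connected bipartite graph has a unique $2$-colouring. The only step meriting a word of justification is the parity statement, which follows from the usual characterization of bipartiteness via the absence of closed walks of odd length, together with the observation that connectedness is exactly what is needed in order to reach every vertex from the fixed base point $v$.
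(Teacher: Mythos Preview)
Your argument is correct: the parity-of-distance characterization of the two colour classes in a connected bipartite graph, together with the distance-preserving property of automorphisms, immediately yields the dichotomy. The paper itself gives no proof of this lemma (it is simply introduced as ``straightforward''), so your write-up is a perfectly acceptable justification of what the authors leave implicit.
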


\begin{lem}\label{fix-aut}
Let $n$, $k$ and $l$ be integers with $1\leq k<l\leq n-1$ and $k+l\leq n$, and   let $V=V_1\cup V_2$ be the bipartition of $G(n,k,l)$ with $V_1$ and $V_2$ shown in (\ref{equ-1}). If $\sigma$ is an automorphism of $G(n,k,l)$ that fixes each vertex of $V_1$, then $\sigma$ is the identity automorphism.
\end{lem}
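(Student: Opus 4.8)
The plan is to show that an automorphism $\sigma$ fixing every $k$-subset of $[n]$ must also fix every $l$-subset, hence is the identity. The key is that an $l$-subset $w\in V_2$ is completely determined by the collection of its $k$-subsets: indeed, $w=\bigcup\{u\mid u\in V_1,\ u\subset w\}$, since $k<l$ forces every element of $w$ to lie in some $k$-subset of $w$. First I would observe that, because $\sigma$ fixes each vertex of $V_1$ pointwise, Lemma \ref{bipartite} (or a direct argument) shows $\sigma$ cannot swap the two sides of the bipartition; as $V_1$ is fixed pointwise we get $\sigma(V_2)=V_2$. So for any $w\in V_2$, $\sigma(w)$ is again an $l$-subset.

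Next, fix $w\in V_2$ and let $u\in V_1$ with $u\subset w$. Since $\{u,w\}\in E$ and $\sigma$ is an automorphism with $\sigma(u)=u$, we get $\{u,\sigma(w)\}\in E$, i.e. $u\subset\sigma(w)$ (the other possibility $\sigma(w)\subset u$ is impossible by cardinalities, as $|\sigma(w)|=l>k=|u|$). Thus every $k$-subset of $w$ is also a $k$-subset of $\sigma(w)$. Taking the union over all $k$-subsets $u\subset w$ yields $w=\bigcup_{u\subset w,\,|u|=k}u\subseteq \sigma(w)$, and since $|w|=l=|\sigma(w)|$ this forces $\sigma(w)=w$. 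Hence $\sigma$ fixes every vertex of $V_2$ as well, so $\sigma$ is the identity automorphism.

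I do not anticipate a serious obstacle here; the lemma is genuinely elementary once one notes the reconstruction identity $w=\bigcup_{u\subset w,\,|u|=k}u$ and the cardinality argument ruling out $\sigma(w)\subset u$. The only point requiring a line of care is confirming that $\sigma$ does not interchange $V_1$ and $V_2$: since $V_1\neq V_2$ (because $k\neq l$) and $\sigma$ fixes $V_1$ pointwise, any vertex of $V_1$ has its image in $V_1$, so by Lemma \ref{bipartite} we are in the case $\sigma(V_1)=V_1$, $\sigma(V_2)=V_2$. Everything else is a direct consequence of preservation of adjacency.
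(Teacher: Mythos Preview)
Your proof is correct and follows essentially the same idea as the paper: an $l$-subset is uniquely determined by its $k$-subsets, so once the $k$-subsets are fixed, the $l$-subsets must be fixed too. The only cosmetic difference is that the paper picks a specific ``sliding window'' of $l-k+1$ overlapping $k$-subsets of $v$ and phrases the conclusion as ``$v$ is their unique common neighbor,'' whereas you take the union over all $k$-subsets of $w$; both arguments amount to the same reconstruction of $w$ from its $k$-subsets.
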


\begin{proof}
Let $v=\{x_1,x_2,\ldots,x_l\}$ be an arbitrary vertex of $V_2$. Set  $u_1=\{x_1,x_2,\ldots,x_k\},$
$u_2=\{x_2,\ldots,x_{k+1}\}, \ldots, u_{l-k+1}=\{x_{l-k+1},\ldots,x_{l}\}$. Clearly,  $u_1,u_2,\ldots, u_{l-k+1}\in V_1$. By the definition of $G(n,k,l)$, we see that $v$ is  the unique common neighbor of $u_1,u_2,\ldots,u_{l-k+1}$. Thus we must have $\sigma(v)=v$ because $u_1,u_2,\ldots,u_{l-k+1}$ have been fixed by $\sigma$ according to the assumption. By the arbitrariness of $v$, the result follows immediately.
\end{proof}

\begin{lem}\label{neighbor}
Let $n$, $k$ and $l$ be integers with $1\leq k<l\leq n-1$ and $k+l\leq n$, and   let $V=V_1\cup V_2$ be the bipartition of $G(n,k,l)$ with $V_1$ and $V_2$ shown in (\ref{equ-1}). Then for each $i$ with $\max\{2k-l,0\}\leq i\leq k-1$ and for any two  vertices $u,v\in V_1$, we have
$|u\cap v|=i$
if and only if
$|N(u)\cap N(v)|=\binom{n-2k+i}{l-2k+i}$.
\end{lem}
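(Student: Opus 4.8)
The plan is to prove both implications by a direct counting argument. The point is that for $u,v\in V_1$ (both $k$-subsets of $[n]$), a vertex $w\in V_2$ lies in $N(u)\cap N(v)$ precisely when $w$ is an $l$-subset containing $u\cup v$; hence $|N(u)\cap N(v)|$ depends only on $|u\cup v|=2k-|u\cap v|$.

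First I would settle the forward direction. Suppose $|u\cap v|=i$, so $|u\cup v|=2k-i$. The hypothesis $i\ge\max\{2k-l,0\}$ gives $2k-i\le l$, so there do exist $l$-subsets containing $u\cup v$, and counting them reduces to choosing the remaining $l-(2k-i)$ elements among the $n-(2k-i)$ elements of $[n]\setminus(u\cup v)$. This yields $|N(u)\cap N(v)|=\binom{n-(2k-i)}{l-(2k-i)}=\binom{n-2k+i}{l-2k+i}$.

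For the converse, I would first record the analogous identity in full generality: for arbitrary $u,v\in V_1$ with $j:=|u\cap v|$, the same reasoning gives $|N(u)\cap N(v)|=\binom{n-2k+j}{l-2k+j}$ when $j\ge\max\{2k-l,0\}$, and $|N(u)\cap N(v)|=0$ otherwise (since then $|u\cup v|=2k-j>l$). Now assume $|N(u)\cap N(v)|=\binom{n-2k+i}{l-2k+i}$ with $\max\{2k-l,0\}\le i\le k-1$. Because $l-2k+i\ge 0$ and $n-2k+i\ge l-2k+i$ (using $n>l$), this target is a positive integer, so necessarily $j\ge\max\{2k-l,0\}$ and therefore $\binom{n-2k+j}{l-2k+j}=\binom{n-2k+i}{l-2k+i}$.

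It remains to deduce $j=i$, and this is the only real content of the lemma. I would obtain it from strict monotonicity of $f(t):=\binom{n-2k+t}{l-2k+t}$ on the integer range $\max\{2k-l,0\}\le t\le k$. The key observation is that the difference of the two binomial arguments, $(n-2k+t)-(l-2k+t)=n-l$, does not depend on $t$, so $f(t)=\binom{n-2k+t}{n-l}$; moreover $k+l\le n$ forces $n-l\ge k\ge 1$, and $t\ge 2k-l$ forces $n-2k+t\ge n-l$, so on this range $f$ is the strictly increasing function $a\mapsto\binom{a}{n-l}$ of $a=n-2k+t\ge n-l$. Hence $f$ is injective on $\max\{2k-l,0\}\le t\le k$, which gives $j=i$ (in particular $j\le k-1$, so $u\ne v$). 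The only things to be careful about are the boundary bookkeeping between the cases $2k\le l$ and $2k>l$, and checking that the endpoints $\max\{2k-l,0\}$ and $k$ both lie in the region where $a\mapsto\binom{a}{n-l}$ is increasing; both are immediate once the substitution $f(t)=\binom{n-2k+t}{n-l}$ is made.
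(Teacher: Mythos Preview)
Your proof is correct and follows essentially the same approach as the paper: the forward implication is the same counting argument, and the converse hinges on the injectivity of $i\mapsto\binom{n-2k+i}{l-2k+i}$. The paper merely asserts this injectivity (invoking $l\le n-1$), whereas you supply the actual justification via the rewrite $f(t)=\binom{n-2k+t}{n-l}$ and monotonicity for $n-l\ge 1$; you also explicitly rule out the degenerate possibilities $j<\max\{2k-l,0\}$ and $j=k$, which the paper leaves implicit.
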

\begin{proof}
As $u,v\in V_1$, we have $|u|=|v|=k$. For $\max\{2k-l,0\}\leq i\leq k-1$,  if $|u\cap v|=i$ then we have $|u\cup v|=2k-i$, so  $u,v$ are simultaneously contained in $\binom{n-2k+i}{l-2k+i}$'s $l$-subsets of $[n]$, that is,  $|N(u)\cap N(v)|=\binom{n-2k+i}{l-2k+i}$.
This proves the necessity. Also, since $l\leq n-1$, for any $i,j$ with $\max\{2k-l,0\}\leq i\neq j\leq k-1$, we can verify that $\binom{n-2k+i}{l-2k+i}\neq \binom{n-2k+j}{l-2k+j}$. Thus the sufficiency follows.
\end{proof}

Now we give the proof of Theorem \ref{set-aut-1}.

{\flushleft \bf Proof of Theorem \ref{set-aut-1}.} 
At the beginning of this section, we have mentioned that $\tilde{S}_n$ and $\tilde{S}_n\times\langle\tau\rangle$ are the subgroups of $\mathrm{Aut}(G(n,k,l))$ for $k+l<n$ and $k+l=n$, respectively.

Let $V=V_1\cup V_2=N_k\cup N_l$  be the bipartition of $G(n,k,l)$. Let  $\sigma$ be an arbitrary automorphism of $G(n,k,l)$. By Lemma \ref{bipartite}, we only need to consider the following two cases.

{\flushleft\bf Case 1.} $\sigma(V_1)=V_1$ and $\sigma(V_2)=V_2$.

In this situation, let  $\sigma|_{V_1}$ be the restriction of $\sigma$ on $V_1$. Clearly, $\sigma|_{V_1}$ is a permutation on $V_1=N_k$. We have the following claim.
{\flushleft\bf Claim 1.} $\sigma|_{V_1}\in \mathrm{Aut}(J(n,k))=\tilde{S}_n$.

Recall that the Johnson graph $J(n,k)$ has $V_1=N_k=\{v\mid v\subseteq [n],~|v|=k\}$ as its vertex set, where  $u,v\in V_1$ are adjacent if and only if $|u\cap v|=k-1$. For any  $u,v\in V_1$, by Lemma \ref{neighbor}, we obtain
$$
\begin{aligned}
|u\cap v|=k-1&\Longleftrightarrow |N(u)\cap N(v)|=\binom{n-k-1}{l-k-1}\\
&\Longleftrightarrow |N(\sigma|_{V_1}(u))\cap N(\sigma|_{V_1}(v))|=\binom{n-k-1}{l-k-1}\\
&\Longleftrightarrow |\sigma|_{V_1}(u)\cap \sigma|_{V_1}(v)|=k-1,\\
\end{aligned}
$$
which  implies that $\sigma|_{V_1}\in \mathrm{Aut}(J(n,k))$. Also, by Lemma \ref{Johnson-aut}, we have $\mathrm{Aut}(J(n,k))=\tilde{S}_n$ because  $1\leq  k\leq \frac{n-1}{2}$. This proves Claim 1.

By Claim 1, there exists some $g\in S_n$ such that $\sigma|_{V_1}=\sigma_g$, where $\sigma_g$ is the permutation on $V_1=N_k$ defined in (\ref{equ-6}). Using the same $g$, we  can  extend $\sigma_g$ as a permutation on $V=V_1\cup V_2$ again by (\ref{equ-6}), and we still denote this permutation by $\sigma_g$. Clearly, $\sigma_g\in \tilde{S}_n\leq \mathrm{Aut}(G(n,k,l))$. Then we see that  $\sigma_g^{-1}\sigma$ fixes each vertex of $V_1$, and so must be the identity automorphism  by Lemma \ref{fix-aut}. Thus $\sigma=\sigma_g\in \tilde{S}_n$.

{\flushleft\bf Case 2.} $\sigma(V_1)=V_2$ and $\sigma(V_2)=V_1$.

In this situation, we must have $|V_1|=|V_2|$, and so $k+l=n$. Let $\tau$ be the automorphism of $G(n,k,n-k)$ defined in (\ref{equ-8}). We see that $\sigma\cdot \tau\in \mathrm{Aut}(G(n,k,n-k))$, and $\sigma\cdot \tau(V_i)=V_i$ for $i=1,2$.  According to what was proved in Case 1, there exists some $g\in S_n$ such that $\sigma\cdot \tau=\sigma_g$, i.e., $\sigma=\sigma_g\cdot \tau$. Therefore, we  conclude that $\sigma\in \tilde{S}_n\times \langle\tau\rangle$.

If $k+l<n$, only Case 1 can occur, thus we have $\mathrm{Aut}(G(n,k,l))\leq \tilde{S}_n\cong S_n$. If $k+l=n$, both Case 1 and Case 2 can occur, and so  $\mathrm{Aut}(G(n,k,n-k))\leq \tilde{S}_n \times \langle\tau\rangle\cong S_n\times \mathbb{Z}_2$, where $\mathbb{Z}_2$ is the cyclic group of order $2$. 

We complete the proof.
\qed

\begin{remark}
\emph{According to Theorem \ref{set-aut-1}, it is easy to see that the set-inclusion graph $G(n,k,l)$ is edge-transitive. Furthermore, for $l=n-k$ we claim that $G(n,k,n-k)$ is arc-transitive, since it is obviously vertex-transitive, and the stabilizer of any given vertex $v$ acts transitively on the set of neighbors. }
\end{remark}

Recall that $B(n,k)=G(n,k,k+1)$, and $H(n,k)=G(n,k,n-k)$. Putting $l=k+1$ and $n-k$  in Theorem \ref{set-aut-1}, we obtain the following two corollaries immediately.

\begin{cor}\label{layer-aut-1}\emph{(Mirafzal \cite{M2})}
Let $n$ and $k$ be integers with $1\leq k\leq \frac{n-1}{2}$. Then  the automorphism group of $B(n,k)$ is 
$$\mathrm{Aut}(B(n,k))\cong
\left\{
\begin{array}{ll}
S_n& \mbox{if $k<\frac{n-1}{2}$},\\
S_n\times \mathbb{Z}_2 & \mbox{if $k=\frac{n-1}{2}$},
\end{array}
\right.
$$
where $S_n$ is the symmetric group on $[n]$ and $\mathbb{Z}_2$ is the cycle group of order $2$.
\end{cor}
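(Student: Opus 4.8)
The plan is to derive this as an immediate specialization of Theorem \ref{set-aut-1}. First I would recall that, by definition, $B(n,k)=G(n,k,k+1)$, so it suffices to put $l=k+1$ in Theorem \ref{set-aut-1} and to read off the resulting case distinction. Before doing so I would check that the hypotheses of Theorem \ref{set-aut-1} are satisfied under the assumption $1\le k\le\frac{n-1}{2}$: from $1\le k$ we get $n\ge 3$, hence $k<k+1=l$ and $l=k+1\le\frac{n+1}{2}\le n-1$, while $k+l=2k+1\le n$ follows directly from $k\le\frac{n-1}{2}$. Thus $G(n,k,k+1)$ is a legitimate set-inclusion graph to which Theorem \ref{set-aut-1} applies.

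It then only remains to rewrite the dichotomy ``$k+l<n$ versus $k+l=n$'' in terms of $k$ alone. With $l=k+1$, the strict inequality $k+l<n$ becomes $2k+1<n$, i.e.\ $k<\frac{n-1}{2}$, and the equality $k+l=n$ becomes $2k+1=n$, i.e.\ $k=\frac{n-1}{2}$ (which in particular forces $n$ to be odd). Substituting into the conclusion of Theorem \ref{set-aut-1} yields $\mathrm{Aut}(B(n,k))\cong S_n$ when $k<\frac{n-1}{2}$ and $\mathrm{Aut}(B(n,k))\cong S_n\times\mathbb{Z}_2$ when $k=\frac{n-1}{2}$, which is precisely the asserted formula.

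There is no real obstacle here: the corollary is a pure specialization, and the only care needed is the bookkeeping on the ranges of $n$ and $k$ (verifying $l=k+1\le n-1$ and checking that the boundary case $k=\frac{n-1}{2}$ is exactly where the extra $\mathbb{Z}_2$ appears). If desired, one may append the remark that in the case $k=\frac{n-1}{2}$ the graph $B(2k+1,k)$ is the middle layer graph and the involution generating the $\mathbb{Z}_2$ factor is the complementation map $\tau$ of (\ref{equ-8}), matching the discussion preceding the proof of Theorem \ref{set-aut-1}.
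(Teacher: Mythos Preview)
Your proposal is correct and follows exactly the paper's own approach: the corollary is obtained immediately by setting $l=k+1$ in Theorem~\ref{set-aut-1}, and you have correctly translated the dichotomy $k+l<n$ versus $k+l=n$ into $k<\frac{n-1}{2}$ versus $k=\frac{n-1}{2}$. Your additional verification of the hypotheses is more explicit than what the paper provides, but entirely in the same spirit.
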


\begin{cor}\label{kneser-aut}\emph{(Mirafzal \cite{M1})}
Let $n$ and $k$ be integers with $1\leq k\leq \frac{n-1}{2}$. Then  the automorphism group of $H(n,k)$ is
$$\mathrm{Aut}(H(n,k))\cong S_n\times \mathbb{Z}_2,
$$
where $S_n$ is the symmetric group on $[n]$ and $\mathbb{Z}_2$ is the cycle group of order $2$.
\end{cor}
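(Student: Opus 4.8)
The plan is to obtain Corollary~\ref{kneser-aut} as a direct specialization of Theorem~\ref{set-aut-1}, exactly as the sentence preceding it announces. The first step is to observe that the hypothesis $1 \le k \le \frac{n-1}{2}$ forces $k < n-k$: indeed $k = n/2$ would already violate $k \le \frac{n-1}{2}$, and $k > n/2$ is excluded a fortiori. Hence, setting $l := n-k$, the triple $(n,k,l)$ satisfies $1 \le k < l \le n-1$ together with $k + l = n$, so all the hypotheses of Theorem~\ref{set-aut-1} are met and we land precisely in its second case.

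The second step is to invoke the definition $H(n,k) = G(n,k,n-k)$ recalled in Section~\ref{s-1}, so that $H(n,k)$ is literally the graph $G(n,k,l)$ for this choice of $l$. Applying the $k+l=n$ branch of Theorem~\ref{set-aut-1} then yields $\mathrm{Aut}(H(n,k)) = \mathrm{Aut}(G(n,k,n-k)) \cong S_n \times \mathbb{Z}_2$, which is the assertion. Tracing through the proof of Theorem~\ref{set-aut-1}, the $S_n$ factor is realized by the induced-action group $\tilde{S}_n$ of (\ref{equ-7}) and the $\mathbb{Z}_2$ factor by the complementation involution $\tau$ of (\ref{equ-8}), which commutes with every element of $\tilde{S}_n$; so the direct-product structure is inherited verbatim.

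There is essentially no obstacle here: the content is entirely contained in Theorem~\ref{set-aut-1}, and the only point that genuinely needs checking is the compatibility of the side condition, namely that $l = n-k$ really does satisfy $k < l \le n-1$ and $k+l = n$ --- all immediate from $1 \le k \le \frac{n-1}{2}$. If one preferred a self-contained argument, one could instead repeat the proof of Theorem~\ref{set-aut-1} with $l = n-k$ fixed throughout, using Lemma~\ref{neighbor} to show that an automorphism preserving the bipartition restricts to an automorphism of the Johnson graph $J(n,k)$, Lemma~\ref{Johnson-aut} to identify that group as $\tilde{S}_n$, and Lemma~\ref{fix-aut} to control the extension to $V_2$; but this merely reproduces the general proof and offers nothing new, so I would simply cite Theorem~\ref{set-aut-1}.
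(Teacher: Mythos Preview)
Your proposal is correct and matches the paper's own approach exactly: the corollary is obtained immediately by specializing Theorem~\ref{set-aut-1} to $l=n-k$, landing in the $k+l=n$ branch. Your verification that $1\le k\le \frac{n-1}{2}$ forces $k<l=n-k\le n-1$ is precisely the only thing that needs checking.
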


%

{\flushleft \bf Acknowledgements.} The first author is  supported  by the China Postdoctoral Science Foundation and the Postdoctoral Research Sponsorship in Henan Province (No. 1902011), the second author is  supported  by the National Natural Science Foundation of China (No. 11671344), and the third author is  supported by the National Natural Science Foundation of China (No. 11461054).

\end{document}